\newcommand{\repeatthanks}{\textsuperscript{\thefootnote}}
\definecolor{dartmouthgreen}{rgb}{0.05, 0.5, 0.06}
\newcommand{\rplus}[0]{\mathbb{R}_+}
\newcommand{\rpluspower}[1]{\mathbb{R}_+^#1}
\newcommand{\zplus}[0]{\mathbb{Z}_+}
\newcommand{\dmin}[1]{\ensuremath{d^*_{#1}}}
\newcommand{\newMVB}[1]{Pandora's MVB}
\DeclareMathOperator*{\argmax}{argmax}
\renewcommand\bibsection%
\begin{document}

\title{Probabilistic Lookahead Strong Branching via a Stochastic Abstract Branching Model}
%
%
\author{Gioni Mexi\inst{1}\thanks{The first two authors contributed equally to the work presented in this paper.}\orcidID{0000-0003-0964-9802} \and
Somayeh Shamsi\inst{2}\repeatthanks{}\orcidID{0009-0002-0726-7173} \and
Mathieu Besançon\inst{1,3}\orcidID{0000-0002-6284-3033} \and
Pierre Le Bodic\inst{2}\orcidID{0000-0003-0842-9533}}

\authorrunning{Mexi, Shamsi et al.}
%
\institute{
Zuse Institute Berlin, Germany\\
\email{mexi@zib.de}
\and
Monash University, Australia\\
\email{\{somayeh.shamsi,pierre.lebodic\}@monash.edu }
\and
Université Grenoble Alpes, Inria, LIG, France\\
\email{mathieu.besancon@inria.fr}
}
\maketitle              
\begin{abstract}

Strong Branching (SB) is a cornerstone of all modern branching rules used in the Branch-and-Bound (BnB) algorithm, which is at the center of Mixed-Integer Programming solvers.
In its full form, SB evaluates all variables to branch on and then selects the one producing the best relaxation,
leading to small trees, but high runtimes.
State-of-the-art branching rules therefore use SB with working limits to achieve both small enough trees and short run times. So far, these working limits have been established empirically.
In this paper, we introduce a theoretical approach to guide how much SB to use at each node within the BnB.
We first define an abstract stochastic tree model of the BnB algorithm where the geometric mean dual gains of all variables follow a given probability distribution.
This model allows us to relate expected dual gains to tree sizes and explicitly compare the cost of sampling an additional SB candidate with the reward in expected tree size reduction.
We then leverage the insight from the abstract model to design a new stopping criterion for SB, which fits a distribution to the dual gains and, at each node, dynamically continues or interrupts SB.
This algorithm, which we refer to as Probabilistic Lookahead Strong Branching,
improves both the tree size and runtime over MIPLIB instances, providing evidence that the method not only changes the amount of SB, but allocates it better.

\keywords{Mixed-Integer Programming \and Branch-and-Bound \and Strong Branching\and Abstract Branching Tree\and Stochastic Model}
\end{abstract}
\section{Introduction}

%
%
%
%
Mixed-integer programming (MIP) is a powerful framework to model and solve optimization problems that include combinatorial structures.
It consists in minimizing a linear objective function over a polyhedron described by linear (in)equality constraints, where a subset of the decision variables must take integer values.

Despite the NP-hardness of general MIP solving, intensive research on the development of efficient algorithms grounded in polyhedral theory \cite{conforti2014integer,koch2022progress} and carefully designed combinations of these algorithms made generic solvers capable of optimizing to proven optimality problems that were previously considered out of reach.


The core of most MIP solvers is the Branch-and-Bound algorithm (BnB)~\cite{achterberg2007constraint}.
It recursively partitions the solution space of the MIP into subproblems and computes their continuous Linear Programming (LP) relaxations.
If the LP solution to a subproblem is integral, i.e., it is a solution to the MIP, then the best MIP solution is updated.
If the optimal value of the LP relaxation of a subproblem is not better than the current best integer solution known for the MIP, this subproblem can be pruned.
Otherwise, the subproblem is further divided.
We refer interested readers to \citet{conforti2014integer} for a recent overview on MIP and to \citet{achterberg2007constraint} for the algorithmic aspects of MIP solving.

Modern solvers obtained outstanding progress in speed and scale over the last decades from improvements in presolving, cutting planes, primal heuristics, and the algorithmic choices present at multiple phases of the solving process.
One crucial component is \emph{branching}, or variable selection, which consists, given a relaxation solution $\hat{\mathbf{x}}$, in finding a variable $x_j$ that violates its integrality constraint to partition the space with two subproblems
with constraints ${x}_j \leq \lfloor \hat{{x}}_j \rfloor$ and ${x}_j \geq \lceil \hat{{x}}_j \rceil$ respectively.
The set of integer variables that have a fractional value in the relaxation solution $\hat{\mathbf{x}}$ of a node are referred to as (branching) candidates.
Although branching on any fractional candidate ensures that BnB attains the MIP optimum, the choice of the candidate has a significant impact on how implicit the enumeration is, i.e. how fast the search is.
Indeed, \cite{achterberg2007constraint} reports that branching on an arbitrary candidate leads to trees 5.43 times larger on average than the best branching rule at the time, which leads to a slowdown factor of 2.26 compared to the state-of-the-art at that time.

One branching technique is \emph{strong branching} (SB), originally proposed in \cite{applegate1998solution}.
SB assesses each branching candidate by solving the two LPs resulting from the partition mentioned above.
The differences in optimal value of the two LPs compared to the current node are referred to as \emph{downgain}, \emph{upgain} when adding the bound constraint forcing the variable ``down'' or ``up'' with respect to its continuous relaxation value, respectively.
These gains in relaxation value or \emph{dual gains} are a key metric to evaluate the progress of the tree towards a proof of optimality.

Despite looking ``only'' one level of depth down the tree, SB is a robust predictor for candidates that lead to small tree sizes, relative to other tractable branching rules. SB can in particular, for some classes of instances, offer theoretical guarantees compared to the optimal tree size \cite{dey2023theoretical}.
In addition to its predictive power for variable selection, strong branching offers additional benefits in modern MIP solvers, including node pruning, primal solutions, and domain propagation.

The current default branching algorithm in SCIP is an extension of reliability~\cite{achterberg2005branching} and hybrid branching~\cite{achterberg2009hybrid}.
It uses in particular strong branching calls for variables that are considered \emph{unreliable} (i.e., which have not been branched on a certain number of times)
and then relies on a weighted score dominated by pseudocosts for reliable variables.
Even when all candidates are still unreliable -- e.g., at the root node -- evaluating
all of them would be prohibitively expensive, solving twice as many LPs as there are candidates.
If the branching rule calls strong branching to evaluate candidates, a
\emph{lookahead} strategy is used: if a certain number of
successive variables are evaluated without any improvement over the best candidate, the procedure stops and returns the current best.
This maximum lookahead value is essentially proportional to a fixed parameter $L$.
Conceptually, this problem can be viewed as a generalized secretary problem with interview cost \cite{gianini1976infinite,bartoszynski1978secretary} or a variant of Pandora's box problem~\cite{beyhaghi2023recent,weitzman1978optimal},
in which indistinct candidates with hidden rewards have to be evaluated sequentially to maximize a net difference between the selected candidate's reward and the total interview costs.
We will refer to the number of unsuccessful successive candidates as the lookahead value throughout this paper and will propose a novel stopping criterion equipped with a dynamic lookahead replacing the static $L$ value.

A probabilistic model of variable dual gains was proposed in \citet{hendel2015enhancing} in order to determine
whether a given variable is unreliable and to choose which one to select for branching.
More precisely, the dual gains of any variable are modeled as random variables following a normal distribution (i.i.d.~across all nodes), and a parametric statistical test is used to determine a probability that a variable can yield a greater gain than the incumbent.
This per-variable distribution can only provide guidance deep in the tree when a variable has been branched on several times to offer statistically significant inference.
In contrast, our probabilistic model represents the dual gains of all variables at the current node as drawn from a distribution,
making our approach applicable even when some variables have not been sampled yet, and therefore from the very first nodes of the tree.
Furthermore, our model can be adapted to any distribution for which the cumulative function can be evaluated, alleviating the need for restricting assumptions such as normality.

\emph{Contributions.}
We define an abstract model of the Branch-and-Bound algorithm, based on the Multi-Variable Branching (MVB) model of \citet{le2017abstract},
incorporating unknown variable gains akin to Pandora's box problem and a particular form of tree restart. We coin the new model as \newMVB{}, or PVB.
We determine additional assumptions under which strong branching should continue or stop and branch on the best candidate found so far, defining a stopping criterion for our Probabilistic Lookahead Strong Branching (PL-SB) algorithm.

We study the dual gain distributions at nodes of the SCIP BnB algorithm where SB is performed on MIPLIB instances and construct a mixed distribution which captures these observed samples well. Beyond our work, we anticipate that this finding will help future research MIP research.
We perform simulation runs on dual gains of MIP instances, comparing the total number of nodes obtained with SCIP current maximum lookahead criterion and PL-SB.
These first simulations show that PL-SB is capable of reducing the expected tree size in \newMVB{}, even when the distribution is not perfectly known but fitted to the dual gains observed so far.

Finally, we design a practical strong branching algorithm incorporating working limits within PL-SB,
and implement it within the SCIP default reliability branching rule.
We conduct extensive experiments to assess its reliable performance on established MIP benchmarks, yielding a reduction in both time and number of nodes, two results that could not be achieved by simply changing some parameters of the current SB strategy.

\section{Parameter Tuning on Strong Branching Yields no Improvements}\label{sec:parameter_tuning}

The reliability pseudocost branching technique used in SCIP has been engineered to perform well across an array of instances of various sizes, difficulties, and structures, designed with careful tradeoffs between predictive power and 
However, with the tremendous progress achieved in MIP solving in the last decades~\cite{koch2022progress} and the successive algorithmic changes,
one could assume that the default parameters of the branching algorithm do not achieve optimal performance anymore and that simply tuning the available parameters already achieves significant performance improvements.

In the default configuration of SCIP, strong branching is restricted by a maximum lookahead $L^{\max}$ defined as:
\begin{equation*}
    L^{\max} = (1 + c^{\text{uninit}} / c^{\text{all}}) \cdot L,
\end{equation*}
where $ c^{\text{uninit}} / c^{\text{all}}$ is the fraction of uninitialized branching candidates and $L$ is the lookahead parameter which by default is fixed to 9.
Moreover, the solver restricts the number of strong branching simplex iterations to:
\begin{equation*}
    \gamma^{\max} = \gamma^{\text{node}} + K,
\end{equation*}
where $\gamma^{\text{node}}$ is the total number of regular node LP simplex iterations and $K$ is the number of additional iterations in SB, which by default is fixed to $10^6$. 

The choice of values for $L$ and $K$ is based on rigorous empirical studies on diverse instance sets, and as we show in \Cref{tab:default_tuning} it is not straightforward to find alternative values that improve the overall performance of the solver.  
\Cref{tab:default_tuning} shows the results of preliminary experiments on the MIPLIB 2017 benchmark~\cite{gleixner2021miplib} with three different random seeds for different values for $L$ and $K$. To compare the running time and number of nodes of different settings we use the shifted geometric mean. 
The first row is SCIP with default parameter values and is compared with each of the three shown settings. We report on the quotient of running time for ``affected'' instances, i.e., instances on which the changed parameters affect the solver default behavior, and the quotient of running time and number of nodes for ``affected-solved'' instances, i.e., affected instances solved by both default SCIP and the setting that we compare it with.

Decreasing the default lookahead parameter $L$ from 9 to 7 leads to worse performance on all affected instances and on affected instances solved by both settings. This behavior can be explained by the fact that worse branching decisions are made because of the smaller maximum lookahead value. Similarly, increasing the lookahead parameter $L$ to 11 leads to worse overall performance, an effect that is not as obvious, since intuitively, better branching decisions are made.
However, SCIP limits the budget of simplex iterations allowed in strong branching. Therefore, it can occur that calling strong branching more frequently at the start of the solving process, e.g., by increasing the parameter $L$, may restrict the calls of strong branching later in the search.
To assess the effect of the two parameters, we conduct a final experiment. Here, we increase the budget of LP simplex iterations by setting $K=1.2\cdot10^6$. However, overall, the final setting also leads to performance deterioration compared to the default settings.

\setlength{\tabcolsep}{10pt}
\begin{table}[ht]
\caption{Evaluating SCIP performance for different values of the strong branching parameters $L$ and $K$.}
\label{tab:default_tuning}
\centering
\footnotesize
\begin{tabular}{lrrrrrrr}
    \toprule
      &\multicolumn{2}{c}{affected}  & \multicolumn{2}{c}{affected-solved} \\
    \cmidrule(lr){2-3}\cmidrule(lr){4-5}
    Setting & \# &time(\%)&  time(\%) &nodes(\%) \\
    \midrule
    $L=9, K=10^6$ (default) & - &  100.0 &100.0 &100.0 \\
    $L=7, K=10^6$& 278 & 104.3 &104.4 &104.8\\
    $L=11, K=10^6$& 269 & 102.1 &104.0 & 105.0\\
    $L=11, K=1.2\cdot10^6$& 271 & 102.4 &105.2&106.5\\
    \bottomrule
\end{tabular}
\end{table}

Finally, we mention that we also experimented with other parameters which also did not lead to conclusive trends or performance improvements.
Even though on average all settings from \Cref{tab:default_tuning} are worse than the default setting, the virtual best uses 16\% fewer nodes. This indicates that a dynamic decision of whether or not the solver should continue applying SB until the maximum lookahead parameter is reached could potentially improve the overall performance of the solver. This virtual best is obtained by applying a single rule throughout the tree, that potential gain could even become higher by applying a rule at each node where a SB decision is made.

\section{\newMVB{}: A Stochastic Abstract Model for Branch-and-Bound}

The preliminary experiment of \cref{sec:parameter_tuning} leads to the conclusion that the parameters controlling strong branching in SCIP are already tuned, but the virtual best configuration shows that adjusting the parameters on a per-instance basis could yield significant improvements.
This implies that a new algorithm that more adaptively drives strong branching in SCIP is necessary to better allocate computational resources.

Given the complexity of MIP solvers and the myriad of ``adaptive'' techniques that could be tapped into, there are virtually infinitely many algorithms that could be designed for this purpose.
Undoubtedly, many would work, in the sense that they would improve performance on reference benchmarks, but at the cost of increased complexity and degrees of freedom in the solver configuration space, without an understanding of why the method works.
We instead focus on building a theoretical model for the BnB process that is well-adapted to represent strong branching and use it as a theoretical foundation for an algorithm so that the improvements it incurs are well understood and can be transferred and generalized to other solvers, methods, and problem types.

Therefore, we start by constructing an abstract BnB model extending the approach of \citet{le2017abstract}, which has proven to capture important aspects of BnB well enough to improve performance in SCIP.
However, all the models proposed in \citet{le2017abstract} suppose that the dual gains are fixed throughout the tree and known in advance, which renders an algorithm like SB completely superfluous in this setting.

For this reason, we design in this section an extended abstract model in which the dual gains are fixed, but not known until the corresponding variable is branched on.
This is not true in general. However, MIP solvers internally estimated the dual gains of variables by a pair of values, the \emph{pseudocosts}. This assumption is useful to analyze the resulting abstract models. 
Based on this model, we design in \cref{sec:prob_lookahead_for_PMVB} a stopping criterion for SB based on the likelihood that additional strong branching iterations provide a better branching candidate, as measured by the tree size.

\subsection{Background on BnB Abstract Modeling}

The original paper on abstract tree models for BnB~\cite{le2017abstract} sets a foundation for studying the properties of BnB algorithms.
The approach focuses on proving a given bound (i.e.~closing a given dual gap) by branching on variables with fixed and known dual gains.
It provides a theoretical framework to better understand BnB and has proven useful for improving solver performance, providing a new rule for variable selection in branching, now a component of SCIP's BnB implementation since in SCIP 7~\cite{gamrath2020scip}.

In the abstract models of~\cite{le2017abstract}, a variable $x_j$ is represented by a pair of values $(l_j, r_j)$.
The value $l_j$ (resp. $r_j$) models the left (resp. right) dual gain of $x_j$, i.e. the change in objective value in the LP relaxation, of the left (resp. right) child when $x_j$ is branched on.
This means that, when $x_j$ is branched on at node $i$, the bound on the left child of $i$ is improved by $l_j$, and the bound on the right child of $i$ is improved by $r_j$.
The gains $(l_j, r_j)$ assigned to $x_j$ remain constant throughout the tree in the abstract model, regardless of the node depth or the dual bound value at that node. 
Given a dual gap $G$, a tree closes the gap $G$ (i.e., proves the bound $G$) if each leaf has a value of at least $G$.
\begin{definition}[Tree dual gap]
The dual gap $G_i$ closed at node $i$ is given by:
\begin{align*}
G_i =
\begin{cases}
    0 & \text{if $i$ is the root node,}  \\
    G_h + l_{j} & \text{if $i$ is the left child of node $h$,} \\
    G_h + r_{j} & \text{if $i$ is the right child of node $h$,}
\end{cases}
\end{align*}
where node $h$ is the parent of $i$, if there is one, $G_h$ is the gap at $h$, and $(l_j, r_j)$ is the pair of gains of the variable branched on at $h$.
\end{definition}


 
One of the problems defined using this abstract model is the Multiple Variable Branching (MVB) problem.
The MVB model is a simplified version of the branch and bound tree, where $n$ variables are given in input, and where one variable $x_j$ with gains $(l_j,r_j)$ must be branched on at each node to build a tree that closes a gap $G$.
In MVB, one variable can be branched on multiple times on a path from the root to a leaf.
However, because of the simplicity of MVB, some aspects of a modern implementation of a BnB, such as SB, cannot be modeled.
Indeed, in a BnB in which the dual gains of the variables would be fixed and known in advance as they are in MVB, information discovery from candidate sampling as performed by SB would not be relevant.
Therefore, in \cref{sec:new_model}, we introduce an abstract model in which the dual gains of the variables are unknown.

\subsection{An Abstract BnB Model with Stochastic Variable Gains}\label{sec:new_model}
We extend MVB to incorporate variable gains that are not known in advance.
They are still fixed throughout the tree, so the gains of a variable are fully revealed once the variable is branched on, including if the variable is evaluated by strong branching, even if it is not selected as a branching candidate.




We will represent SB in an abstract BnB model using \emph{restarts}, a common technique from constraint and mixed-integer programming.
We define a restart as the removal of all nodes from the tree, with two important properties:
\begin{inparaenum}[(a)]
\item the nodes evaluated before a restart are still accounted for in the total cost of the tree, and
\item the observed dual gains remain valid after a restart.
\end{inparaenum}
Performing SB therefore consists in branching on one variable, recording its dual gain, restarting the tree, and iterating.
We can now define the PVB problem on our abstract BnB model:
\begin{definition}[Pandora's Multi-Variable Branching Problem]
Given $n \in \zplus$ variables with random dual gains in $\rpluspower{2}$ that follow their own distribution, a target gap $G\in \rplus$, and $k \in \zplus$, can a BnB tree that closes the gap $G$ be built using at most $k$ nodes, using each variable as many times as needed, where the random dual gains of a variable are revealed when it is first branched on, and where restarts are allowed?
\end{definition}


Note that one can effectively simulate SB in PVB by restarting after branching on one variable, and finally build the final tree without restarting.


Note that PVB does not make any assumption on whether and how the dual gains of the variables are distributed and related.
In \cref{sec:SB_in_MVB}, we will assume that the dual gains are i.i.d.~and follow a distribution that has a defined cumulative distribution function.
\cref{sec:distribfitting} shows that this assumption is realistic by assessing several distribution families.

\section{Probabilistic Lookahead: a Stopping Criterion for Strong Branching on Pandora’s MVB}\label{sec:prob_lookahead_for_PMVB}
\cref{sec:parameter_tuning} shows the potential of more selectively using SB in MIP solvers.
In this section, we design a SB strategy for PVB, and we compare it to SCIP's default rule on PVB instances. 
Simulation experiments on instances of the new abstract model show that the new criterion is substantially better in total LP evaluations than the fixed maximum lookahead stopping criterion used in SCIP.

\subsection{Necessary Assumptions}
The smallest tree that closes a gap $G$ by repeatedly branching on a single variable is referred to as \emph{SVB tree}, in line with~\cite{le2017abstract}. 
In order to establish a stopping criterion that is grounded in theory, we first make a number of explicit assumptions:
\begin{enumerate}[label=\textbf{A\arabic*}, start=0]
    \item The minimum size MVB tree is the SVB tree of the best variable. \label{assumption:small_MVB_is_SVB}
    \item The size of an SVB tree that closes $G$ by only branching on variable with gains $(g_j, g_j)$ is equal to the size of the tree that only branches on a variable with gains $(l_j, r_j)$, where $g_j=\sqrt{l_j r_j}$ is the geometric mean of dual gains of variable $x_j$.\label{assumption:geomean_ts_approx}
    \item The geometric mean of the dual gain of all variables follows the same distribution.\label{assumption:geomean_distrib}
\end{enumerate}

Assumption~\ref{assumption:small_MVB_is_SVB} does not hold in general, and simple counterexamples can be constructed~\cite{anderson2021further, le2017abstract}, but a key result from~\cite[Theorem 7]{le2017abstract} is that, for large gaps, the size of an optimal MVB tree grows at the same rate as the size of the best SVB tree.
Therefore, in this paper, we use the SVB tree size as a measure of the quality of a variable.
Assumption~\ref{assumption:geomean_ts_approx} is perhaps the furthest away from actual BnB trees, based on early experiments. 
However, the numerical improvements shown in \cref{sec:experiments_on_MVB} and \ref{sec:experiments_on_MIP} show that this approximation is close enough to be representative of the quality of the variables.
We show that Assumption~\ref{assumption:geomean_distrib} is close to what can be observed on dual gains of real instances separately in \cref{sec:distribfitting}, as these results are interesting in and of themselves.

We focus on the geometric mean of a variable because its square, the \emph{product}, is a state-of-the-art scoring function for variables \cite{achterberg2007constraint}. 
Hence, it has been shown that it is a good practical heuristic for ranking branching candidates.
We now use the assumptions above to design a stopping criterion for SB.

\subsection{Probabilistic Lookahead: a Stopping Criterion for Strong Branching on \newMVB{}}\label{sec:SB_in_MVB}
We now use \newMVB{} to design a new stopping criterion for SB, which we call \emph{Probabilistic Lookahead}.
At each SB iteration $i$, the tree is restarted, and an arbitrary variable $x_j$ is branched on, which reveals its dual gains $(l_j,r_j)$.
We denote $S_i$ the set of variables that have known dual gains at iteration $i$.
The stopping criterion then decides whether to further iterate and expand $S_{i+1}$ or stop and use a variable in $S_i$.
Using Assumption~\ref{assumption:geomean_ts_approx}, we estimate the SVB tree size as $2^{d_j + 1} -1$, where $d_j = \lceil\frac{G}{g_j}\rceil$, which is the exact SVB tree size of variable $(g_j, g_j)$.
Therefore, according to this estimate, the best variables found so far are those that minimize $\dmin{i}= \min_{x_j \in S_i} d_j$ or equivalently maximize the mean dual gain.

In the following statement and throughout, we use the term ``size'' when referring to the nodes of a tree, and we write ``nodes used'' to refer to all nodes created so far, including those that have been discarded by a restart.

\begin{proposition}
If we stop SB at the end of iteration $i>0$ with a depth $\dmin{i}$, the minimum number of nodes used to construct the MVB tree is:
\begin{equation*}
t_i(G) = 2^{\dmin{i} + 1} - 1 + 2 \cdot i.
\end{equation*}
\end{proposition}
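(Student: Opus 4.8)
The plan is to split the quantity ``nodes used'' into two parts --- the nodes consumed by the $i$ completed strong-branching iterations and the nodes of the final gap-closing tree --- to read off the first part directly from the SB protocol, to minimise the second part using Assumptions~\ref{assumption:small_MVB_is_SVB} and~\ref{assumption:geomean_ts_approx}, and then to recombine the two while carefully charging the single root that is shared by both parts.

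For the SB phase, I would argue straight from the definition of an iteration on \newMVB{}: each of the $i$ iterations restarts the tree and branches on exactly one variable to reveal its gains. Since branching is binary, such an iteration creates precisely two new nodes (the two children of the restored root), and by the restart convention these are discarded but still charged to the total. Hence the $i$ iterations contribute exactly $2i$ nodes on top of the root, regardless of which variables happened to be sampled.

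For the final tree, I would use that, once SB has stopped, a tree that is guaranteed to close $G$ can branch only on variables whose gains are already known, i.e.\ on $S_i$. By Assumption~\ref{assumption:small_MVB_is_SVB} the smallest MVB tree over such a variable set is the SVB tree of its best variable, and by Assumption~\ref{assumption:geomean_ts_approx} together with the estimate stated just before the proposition, the SVB tree of a variable $x_j$ that closes $G$ has size $2^{d_j+1}-1$, where $d_j=\lceil G/g_j\rceil$ and the tree is the complete binary tree all of whose leaves lie at depth $d_j$. Minimising $d_j$ over $x_j\in S_i$ --- equivalently, maximising the mean dual gain --- replaces $d_j$ by $\dmin{i}$, so the final tree has at least $2^{\dmin{i}+1}-1$ nodes, with equality attained by the SVB tree of a variable achieving $\dmin{i}$.

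Finally I would assemble the two bounds. The root is created once and is one of the $2^{\dmin{i}+1}-1$ nodes of the final tree; each of the $i$ SB iterations adds its two discarded children on top of that same root; so the minimum total is $\big(2^{\dmin{i}+1}-1\big)+2i=t_i(G)$, and the strategy ``branch only at the restored root for $i$ iterations, then build the complete depth-$\dmin{i}$ SVB tree of the best sampled variable'' realises exactly this count. I expect the only genuinely delicate point to be this last bookkeeping step --- making precise that a restart re-charges only the two children and not a fresh root, so the per-iteration cost is $2$ rather than $3$ --- whereas the combinatorial ingredients (complete binary trees, the $2^{d+1}-1$ node count, and the depth $\lceil G/g_j\rceil$) are routine once Assumptions~\ref{assumption:small_MVB_is_SVB} and~\ref{assumption:geomean_ts_approx} are invoked.
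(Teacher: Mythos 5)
Your proposal is correct and follows essentially the same route as the paper's proof: charge $2i$ nodes for the $i$ strong-branching iterations, reduce the final tree to the SVB tree of the best variable in $S_i$ via the stated assumptions, and conclude it is a perfect binary tree of depth $\dmin{i}$ with $2^{\dmin{i}+1}-1$ nodes. Your bookkeeping of the shared root and your (correct) attribution of the MVB-to-SVB reduction to Assumption~\ref{assumption:small_MVB_is_SVB} are if anything slightly more careful than the paper's own write-up.
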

\begin{proof}
First, according to Assumption~\ref{assumption:geomean_ts_approx}, the total size of the best MVB tree is the size of the best SVB variable, therefore we can suppose without loss of optimality that the same variable is branched on throughout the tree.
Second, we can assume that the variable branched on is in $S_i$, as otherwise fewer nodes are required by stopping SB at $i=0$,
i.e.,~branching on a variable on which we have no information.
Therefore, the minimum-size tree is a perfect binary tree, where all leaves have depth $\dmin{i}$, and whose size is thus $2^{\dmin{i} + 1} - 1$.
Accounting for the $i$ SB iterations, the total number of nodes used is $2^{\dmin{i} + 1} - 1 + 2 \cdot i$. \qed
\end{proof}

In order to decide whether to stop, we need to estimate the size of the tree if we continue iterating SB.
Following Assumption~\ref{assumption:geomean_distrib}, we suppose that we can compute the probability that the next variable will decrease the depth required,
i.e.~that $\dmin{i+1} < \dmin{i}$.
For $d=\{1, \dots, \dmin{i}\}$, we denote $p_d$ the probability that $\dmin{i+1}=d$.




\begin{theorem}
The expected number of nodes used if we stop SB at the end of iteration $i+1$ is:
\begin{align}
E[t_{i+1}(G)] = \sum_{d=1} ^ {\dmin{i}} ((2 ^ {d + 1} -1) \cdot p_d) + 2 \cdot (i+1).\label{eq:sb}
\end{align}  
\end{theorem}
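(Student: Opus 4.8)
The plan is to condition on the outcome of the $(i+1)$-th strong branching iteration and apply the previous proposition in each branch of the conditioning. After iteration $i+1$, the new best depth $\dmin{i+1}$ takes some value; by Assumption~\ref{assumption:geomean_ts_approx} the tree built by repeatedly branching on that best variable is a perfect binary tree of size $2^{\dmin{i+1}+1}-1$, and accounting for the $i+1$ restart-and-branch steps adds $2(i+1)$ nodes, so by the proposition $t_{i+1}(G) = 2^{\dmin{i+1}+1}-1 + 2(i+1)$ deterministically given $\dmin{i+1}$.

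Next I would take expectations over $\dmin{i+1}$. Since strong branching only ever improves the incumbent best variable, we have $\dmin{i+1} = \min\{\dmin{i}, d_{j}\}$ where $d_j$ is the depth of the freshly sampled variable, hence $\dmin{i+1} \le \dmin{i}$ always, so the support of $\dmin{i+1}$ is $\{1, \dots, \dmin{i}\}$. With $p_d = \Pr[\dmin{i+1} = d]$ for $d \in \{1,\dots,\dmin{i}\}$ as defined just before the statement, the law of total expectation gives
\begin{align*}
E[t_{i+1}(G)] &= \sum_{d=1}^{\dmin{i}} \Pr[\dmin{i+1}=d]\cdot\big(2^{d+1}-1 + 2(i+1)\big) \\
&= \sum_{d=1}^{\dmin{i}} (2^{d+1}-1)\, p_d + 2(i+1)\sum_{d=1}^{\dmin{i}} p_d.
\end{align*}
Because $\dmin{i+1}$ is supported entirely on $\{1,\dots,\dmin{i}\}$, the probabilities $p_d$ sum to $1$ over this range, so the second term collapses to $2(i+1)$, yielding exactly \cref{eq:sb}.

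I do not anticipate a genuine obstacle here: the theorem is essentially the expectation-valued companion of the preceding proposition, and the argument is a one-line conditioning once the structural facts (perfect binary tree via Assumption~\ref{assumption:geomean_ts_approx}, monotonicity $\dmin{i+1}\le\dmin{i}$, and the $2(i+1)$ accounting of restarts) are in place. The only point that deserves an explicit sentence is why $\sum_{d=1}^{\dmin{i}} p_d = 1$ — that is, why no probability mass escapes above $\dmin{i}$ — which follows immediately from the fact that sampling an additional variable cannot worsen the best depth found so far, since a previously evaluated variable remains available to branch on.
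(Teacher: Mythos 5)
Your proof is correct and follows essentially the same route as the paper's: both condition on the value of $\dmin{i+1}$, use the fact that its support is $\{1,\dots,\dmin{i}\}$ so that $\sum_{d=1}^{\dmin{i}} p_d = 1$, and then add the $2(i+1)$ nodes from the strong branching calls. Your version is slightly more explicit in justifying the support via $\dmin{i+1}=\min\{\dmin{i},d_j\}$ and in invoking the preceding proposition, but the argument is the same.
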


\begin{proof}
At iteration $i+1$, the minimum depth of the tree is $d = 1$, and the maximum depth is $\dmin{i}$.
Hence $\sum_{d=1} ^ {\dmin{i}} p_d = 1$.
Therefore, the expected value of the size of the final tree, if SB stops at iteration $i+1$, is $\sum_{d=1}^{\dmin{i}}(2^{d + 1} -1) \cdot p_d$.
Additionally counting the $i+1$ strong branching calls, each of them producing two nodes, we obtain~\eqref{eq:sb}.\qed
\end{proof}

The Probabilistic Lookahead stopping criterion then consists in comparing $t_i(G)$ and $E[t_{i+1}(G)]$ at the end of every iteration $i$, to decide whether to continue SB or to stop it and branch on the best candidate found so far.

\subsection{Evaluating the Probabilistic Lookahead on \newMVB{}}\label{sec:experiments_on_MVB}

In this section, we investigate the effectiveness of the Probabilistic Lookahead SB stopping criterion on instances of \newMVB{}.
In order to create realistic instances, we extract dual gains observed during the solution of MIP instances.
Specifically, we use the \emph{full} SB rule of SCIP, with all other parameters at default values, i.e.~we perform SB on all branching candidates, and exit after the root node.
Together with a gap $G$, all dual gains collected for one MIP instance make a PVB instance.

The experiment compares different stopping criteria when applied to \newMVB{} in terms of total number of nodes.
The baseline is the state-of-the-art lookahead strategy (see \cref{sec:parameter_tuning}), which is the default in SCIP and which we refer to as ``Fixed Lookahead''.
We compare this to the Probabilistic Lookahead SB, with different dynamically-fitted distributions: mixed exponential and mixed Pareto, as they are the best two found in \cref{sec:distribfitting}, and the exponential to assess the impact of representing the mass point at 0.

Because we observed that the performance of these stopping criteria is similar relative to the others on all instances, here we only present results on the \texttt{Trento1} MIPLIB instance.
For each gap value $G$, we conduct 1000 experiments.
The average overall number of nodes and number of nodes used for SB are presented in \cref{table:sb-timtab}.

The outcome demonstrates that the abstract model performs significantly better than the SCIP strategy in terms of number of nodes required to close the gap, across all examined cases.

\begin{table}[ht]
\centering
\caption{Comparison of the number of nodes with different stopping criteria for Pandora's MVB problem based on dual gains from the \texttt{Trento1} instance. The number of nodes produced by SB is given in parentheses.}
\label{table:sb-timtab}
\begin{tabular}{cccccc}
    \toprule
    \multirow{2}{*}{Gap} & &\multicolumn{3}{c}{Fitted distribution} & \\
    \cmidrule(lr){3-5}
    & Fixed lookahead & Exp. & Mixed Exp. & Mixed Pareto & Full SB \\ 
    \midrule
    1000  & 112 (64) & 91 (42) & \textbf{90} (44) & \textbf{90} (44) & 1009 (966)\\
    2000  & 161 (64) & 137 (42) & \textbf{136} (44) & 145 (52) & 1051 (966) \\
    3000  & 210 (66) & 189 (44) & \textbf{185} (46) & 194 (62) & 1093 (966) \\
    4000  & 338 (66) & \textbf{235} (44) & 236 (46) & 259 (68) & 1133 (966)\\
    5000  & 1267 (64) & 306 (48) & \textbf{302} (50) & 348 (96) & 1175 (966)\\
    6000  & 1810 (64) & 370 (52) & \textbf{365} (52) & 387 (110) & 1217 (966) \\
    6500  & 26258 (64) & 462 (56) & 415 (54) & \textbf{409} (116) & 1237 (966) \\
    \bottomrule
\end{tabular}
\end{table}

\section{The Geometric Mean of Dual Gains Follows a Mixed Distribution}\label{sec:distribfitting}
Before presenting the algorithm we design for strong branching, we assess the accuracy of Assumption~\ref{assumption:geomean_distrib}, namely, that there is a probability distribution that captures well the geometric mean of dual gains.
In practice, the dual bound changes when branching on a given variable depend heavily on the particular structure of the problem.
Fixing decision variables that impact the rest of the problem structure
-- e.g., design variables in a network design problem -- will result in structural changes
that are likely to change the relaxation bound a lot more than variables that have a more ``local'' influence on 
-- e.g., routing variables in the same network design example.
We compare the distribution of mean dual gains for the MIPLIB 2017 benchmark instances at the root node with different distributions fitted with their maximum likelihood estimators.
Three requirements for the distribution we fit are having a zero probability on negative gains, allowing for a mass point at zero, and having a support that is a proper superset of the interval $\left[0,g_{\max}\right]$, since we want to estimate the probability of a new sample having a value greater than the current maximum mean dual gain $g_{\max}$, which would be zero if the support is equal to the interval, e.g., with the uniform or empirical cumulative distributions. 
The reason for the mass point at zero is the dual gain often being zero for many variables, i.e., the LP optimum after branching remains on the original optimal face.
To accommodate this characteristic, we model dual gains with a mixed continuous-discrete distribution:
\begin{align*}
P[G \leq g] = p_0 + (1 - p_0) F_D(g; \theta),
\end{align*}
with $p_0$ the probability of a zero dual gain and $F_D(\cdot\ ; \theta)$ the cumulative distribution function of a continuous probability distribution $D$ with parameters $\theta$.
We tested several continuous distribution candidates including the uniform, log-normal, exponential, Pareto, and normal distributions.
Note that the normal distribution was used mostly as a ``control'', since its nonempty support on negative numbers makes it a poor modeling choice.
We fit the aforementioned distributions only on nonzero gains and evaluate their fitness with the Kolmogorov-Smirnov non-parametric test implemented in \texttt{Distributions.jl}~\cite{JSSv098i16}.
This test evaluates the maximum distance between the empirical cumulative distribution function of the data and the cumulative function of the tested distribution (with parameters estimated from the data).
The results hint that the exponential and Pareto distributions appear as good candidates, based on a proportion of dual gains series on which the null hypothesis fails to be rejected (20\% of the dual gains series for Pareto, 33\% for exponential, when evaluating series with at least 10 dual gains, and rejecting the null hypothesis for a $p$ value of $5\%$), although we notice a difference between the fitness of the distribution and its performance on Pandora's MVB and within the actual solving process of a solver in the next section. Future work will investigate this relationship and gap between the predictive capabilities of a distribution and its performance when used within the probabilistic lookahead algorithm.


\section{Probabilistic Lookahead within SCIP's Strong Branching}
Based on the abstract model for strong branching and a probability distribution fitted on observed dual gains, we can compute a probability that sampling a new strong branching candidate attains a higher mean dual gain than the current one, and correspondingly, reduces the total number of LPs to evaluate.
Equipped with this estimation, we can dynamically decide whether to sample a new candidate or stop strong branching, and then branch on the candidate offering the highest predicted score, either with the dual gain sampled by strong branching or with the pseudocosts for the other reliable candidates.
We summarize our method in Algorithm~\ref{alg:strongbranchinit}.
One central difference of the algorithm integrated within reliability pseudocost branching compared to the abstract model presented in \cref{sec:new_model} is
that it uses a weighed sum as a score function, aggregating not only the dual gains (or pseudocosts for reliable candidates), but also a range of other criteria used for branching,
such as inference score and conflict analysis~\cite{achterberg2009hybrid}, the fraction of the times a variable led to node cutoff, the score of cuts corresponding to the variable~\cite{turner2023branching}, or dual degeneracy~\cite{gamrath2020exploratory}.
The weights of these scores however remain orders of magnitude lower than pseudocosts or dual gains, effectively making them act as tie-breakers, we thus consider that our model -- although ignoring these additional aspects -- captures the essential information.
\begin{algorithm}[ht]
\caption{Probabilistic Lookahead Strong Branching}
\label{alg:strongbranchinit}
\SetNlSty{textbf}{(}{)}
\SetKwInOut{Input}{Input}
\SetKwInOut{Output}{Output}
\SetArgSty{textnormal}
\Input{Set of fractional candidates, $\phi = 0.6$}
\Output{Updated pseudocosts, best $k$ among unreliable candidates}
    $L^{\max} \gets (1 + c^{\text{uninit}} / c^{\text{all}}) \cdot L$\;
    $    \gamma^{\max} = \gamma^{\text{node}} + K$\;
\For{$x_j$ \textbf{in} at most $C$ fractional unreliable candidates}{
    Apply SB on $x_j$ to compute the down- and upgain $\Delta^-$, $\Delta^+$\;
    Update the pseudocosts with the gains $\Delta^-$, $\Delta^+$\;
    $g_j \gets \sqrt{(\Delta^- + \epsilon) \cdot (\Delta^+ + \epsilon)} - \epsilon$\;
    Update the distribution with the new geometric mean gain $g_j$\;
    $s_j \gets \text{score}(g_j)$\;
    $k \gets \argmax_{k\in 1..j} s_k$\;
    \If{${s_k}$ has not changed for $L^{\max}$ iterations}{
        \textbf{break}\;
    }
    \If{strong branching LP iterations exceed the limit $\gamma^{\max}$}{
        \textbf{break}\;
    }
    \If{$s_k$ not changed for $\phi \cdot L^{\max}$ iterations \textbf{and} enough nonzero samples}{
        Test the expected tree size now and with one more SB iteration\;
        \If{no expected improvement}{
            \textbf{break}\;
        }
    }
}
\end{algorithm}
The small perturbation $\epsilon$ is used to produce shifted geometric mean values $g_j$ that can be compared, even when one of the down- or upgain is close to zero.
We note that when applying strong branching, there is a limit on the number of LP iterations per candidate, mitigating pathological cases where branching yields a high number of simplex iterations.

\section{Computational Experiments}\label{sec:experiments_on_MIP}

We implement our algorithm as a branching rule in SCIP 8~\cite{bestuzheva2023enabling} generalizing the hybrid branching algorithm~\cite{achterberg2009hybrid}.
The modified branching rule used for the computational experiments is available and open-source\footnote{The branching rule is integrated in \href{https://github.com/matbesancon/scip/tree/strongbranching\_dynamiclookahead\_2}{github.com/matbesancon/scip} on the 
\texttt{strongbranching\_dynamiclookahead\_2} branch.}
and will be integrated in the next major release of SCIP.

In our computational experiments, in order to lessen the effects of performance variability~\cite{lodi2013performance} and obtain accurate comparisons, we use a large test set of diverse problem instances.
In particular, our test set consists of the official benchmark set MIPLIB 2017~\cite{gleixner2021miplib}, with random seeds 0 to 4, amounting to 1200 instance-seed pairs. For the remainder of this paper, we will refer to each instance-seed pair as an \textit{instance}.

All experiments are run in exclusive mode (one job per machine) on a cluster equipped with Intel Xeon Gold 5122 CPUs running at 3.60GHz, where each run is restricted to a 4-hour time limit and 48GB of memory.
After preliminary experiments, we observed that the mixed Pareto distribution not only fits the data as observed in \cref{sec:distribfitting}, but also works better than the exponential distribution, which is the other one we assessed our method with.
Our method can only stop strong branching when enough non-zero samples are collected, and we reached 60\% of the maximum lookahead.

\Cref{tbl:1} compares default SCIP (``default'') to SCIP using our PL-SB strategy (``dynamic'').
Besides the number of instances solved to optimality, the main measure we are interested in is the shifted geometric mean of running time and number of BnB nodes.
The shift for the running time is set to 1 second and for the nodes to 100.
We compare the two settings on instances where the solver behavior is affected by our dynamic strong branching method (``affected''), and also on the whole test set (``all''). Moreover, we show results for the subcategories with solved instances by both settings  (``$*$-solved''), and also harder instances solved in more than 1000 seconds by at least one of the two settings (``$*$-solved-1000+''). In the last two columns, we show for each category the relative difference of the two settings as a percentage. We highlight relative differences exceeding 1\% by presenting them in bold in the table.

On the entire test set (``all''), we solve ten more instances with the dynamic SB strategy and observe a substantial improvement in both runtime and number of nodes on instances affected by our method.
On ``affected'', there is a decrease in both running time and nodes. In particular, for harder instances, we observe a 8\% decrease in running time and a 9\% decrease in the number of nodes.
On the entire test set, we observe a slight improvement of 1-2\% in terms of both running time and number of nodes. This marginal impact is due to the limited number of instances affected by the PL-SB strategy. Our method stops strong branching earlier than the lookahead value in 20\% of the calls.
 
\begin{table}
\caption{Performance comparison of SCIP (default) and SCIP using the PL-SB strategy (dynamic)}
\label{tbl:1}
\scriptsize
\setlength{\tabcolsep}{7pt}
\begin{tabular*}{\textwidth}{@{}l@{\;\;\extracolsep{\fill}}rrrrrrrrr@{}}
\toprule
&           & \multicolumn{3}{c}{default} & \multicolumn{3}{c}{dynamic} & \multicolumn{2}{c}{relative} \\
\cmidrule{3-5} \cmidrule{6-8} \cmidrule{9-10}
Subset                & instances &                                   solved &       time &        nodes &                                   solved &       time &        nodes &       time &        nodes \\
\midrule
affected &  235 & 223 & 944.6 & 11019 & \textbf{233} & 907.1 & 10423 & \textbf{0.96} & \textbf{0.94} \\
affected-solved & 221 & 221 & 795.0 & 9596 & 221 & 771.7 & 9105 & \textbf{0.97} & \textbf{0.95} \\
affected-solved-1000+& 110 & 110 & 4062.7 & 42394 & 110 & 3756.5 & 38712 & \textbf{0.92} & \textbf{0.91} \\
\midrule
all & 1200 & 685 & 1591.9 & 6317 & \textbf{695} & 1578.9 & 6225 & 0.99 & \textbf{0.98}\\
all-solved & 683 & 683 & 302.2 & 3254 & 683 & 299.0 & 3198 & 0.99 & \textbf{0.98}\\
all-solved-1000+ & 254 & 254 & 3225.4 & 30179 & 254 & 3117.2 & 29013 & \textbf{0.97} & \textbf{0.96}\\
\bottomrule
\end{tabular*}
\end{table}

The results highlight that the working limits on SB (LP iterations and maximum lookahead) significantly impact both tree size and runtime.
We therefore perform an additional experiment with a 2-hour time limit, 20\% more SB simplex iterations (amounting to 120000 iterations) and a lookeahead of 11 instead of 9;
we then compare the static lookahead against our dynamic algorithm with this new setting in \Cref{tbl:2}.
The time limit was reduced compared to the first experiment to mitigate the computational burden of the overall experiments.

\begin{table}[ht]
\caption{Performance comparison of SCIP (default) and SCIP using the PL-SB strategy (dynamic) after increasing the maximum lookahead $L$ to 11 and the maximum number of simplex LP iterations $K$ in SB to $1.2\cdot 10^6$.}
\label{tbl:2}
\scriptsize
\setlength{\tabcolsep}{7pt}
\begin{tabular*}{\textwidth}{@{}l@{\;\;\extracolsep{\fill}}rrrrrrrrr@{}}
\toprule
&           & \multicolumn{3}{c}{default} & \multicolumn{3}{c}{dynamic} & \multicolumn{2}{c}{relative} \\
\cmidrule{3-5} \cmidrule{6-8} \cmidrule{9-10}
Subset                & instances &                                   solved &       time &        nodes &                                   solved &       time &        nodes &       time &        nodes \\
\midrule
affected&  188 & 179 & 577.4 & 5588 & \textbf{183} & 546.9 & 5281 & \textbf{0.95} & \textbf{0.94} \\
affected-solved & 174 & 174 & 479.8 & 4675 & 174 & 450.5 & 4368 & \textbf{0.94} & \textbf{0.93} \\
affected-solved-1000+& 74 & 74 &2644.3 & 19718& 74 & 2452.1 & 17619& \textbf{0.93} & \textbf{0.89} \\
\midrule
all & 1200 & 632 & 1168.3&4570& \textbf{636} & 1164.3 & 4541 & 1.00 & 0.99\\
all-solved & 627 & 627 & 225.4&2605 & 627 & 223.6&2566 & 0.99 & \textbf{0.98}\\
all-solved-1000+ & 201 & 201 & 2266.3&23660 & 201 & 2239.8 & 22993 & 0.99 & \textbf{0.97}\\
\bottomrule
\end{tabular*}
\end{table}

In this setting, we solve four more instances with the PL-SB algorithm and observe a substantial improvement in both runtime and number of nodes on affected instances solved by both methods.
In particular, we obtain 11\% fewer nodes on hard affected instances, and a 7\% smaller runtime.
Overall, we observe a 5\% improvement on affected instances.
We however note that this changed setting is not performing as well as the PL-SB algorithm with default SB parameters presented in \cref{tbl:1}, when compared on the basis of the 2-hour time limit.
The key conclusion we draw is that the dynamic lookahead mechanism improves over its static counterpart for different values of the SB parameters.

SCIP is also an exact global solver for Mixed-Integer Nonlinear Programs (MINLP), which remain much more challenging than their linear counterparts.
A lot of algorithmic questions considered answered for MIPs still need further investigation for MINLPs, see \cite{vigerske2018scip,bestuzheva2023enabling} for an overview of the formulation, methods, and implementation involved.
In particular, some techniques that are essential to the performance of MIP solving may be insignificant or detrimental on MINLPs; motivating us to assess the proposed Probabilistic Lookahead Strong Branching on MINLPs.

We test our method against the default lookahead on the \texttt{minlpdev-solvable}\footnote{The precise subset used will be made available in the companion repository of our paper.} subset of the MINLPLIB~\cite{bussieck2003minlplib,vigerske2021minlplib} which is the standard to assess the performance of SCIP on MINLPs.
We run the 169 instances with 5 seeds, resulting in 845 instance-seed pairs.
We exclude any instance-seed pair leading to numerical errors on any of the settings, leaving 817 instance-seed pairs.
The time limit for this experiment is one hour since this captures the bulk of the instances already (808 out of 817 instances).
On affected instances, our method produces modest improvements, 2\% in time and 4\% in the number of nodes.

The results are presented in \Cref{tbl:minlp}, highlighting a performance improvement consistent with that observed on MIPs.
Unlike on MIP instances, using the exponential instead of the Pareto distribution produces slightly better results, with a 3\% reduction in time and 4\% reduction in the number of nodes.
This experiment highlights that the new algorithm improves the performance of SCIP on a diverse set of MINLP instances with the same effect of fewer nodes and reduced time that we observed on MIPs.
It also suggests that a single distribution might not be the best to capture dual gains in all situations, opening the question of adaptively selecting a distribution to fit based on instance attributes.

\begin{table}
\caption{Performance comparison of SCIP (default) and SCIP using the PL-SB strategy (dynamic) on MINLP instances}
\label{tbl:minlp}
\scriptsize

\begin{tabular*}{\textwidth}{@{}l@{\;\;\extracolsep{\fill}}rrrrrrrrr@{}}
\toprule
&           & \multicolumn{3}{c}{default} & \multicolumn{3}{c}{dynamic} & \multicolumn{2}{c}{relative} \\
\cmidrule{3-5} \cmidrule{6-8} \cmidrule{9-10}
Subset                & instances &                                   solved &       time &        nodes &                                   solved &       time &        nodes &       time &        nodes \\
\midrule
affected             &    140 &                                      140 &    47.2 &         1604 &                                      140 &       46.5 &         1542 &        \textbf{0.98} &          \textbf{0.96} \\
\midrule
all            &    817 &                                      808 &       19.1 &         2429 &                                      808 &       19.0 &         2413 &        0.99 &          0.99 \\
all-solved           &      808 &                                      808 &       18.0 &         2273 &                                      808 &       17.9 &         2258 &   0.99 &          0.99 \\
\bottomrule
\end{tabular*}
\end{table}

\section{Conclusion}

In this paper, we defined Pandora's Multi-Variable Branching, an abstract model of the Branch-and-Bound algorithm that represents the dual gains at a particular node as a random variable with a known probability distribution.
All branching candidates represent one realization of this random variable, strong branching can then be viewed as sampling this distribution to obtain a candidate with the highest possible dual gain.
Equipped with this model and a family of probability distributions that model the dual gains observed on real instances, we explicitly estimate the expected reduction in the number of nodes from one more strong branching call,
compare that reduction with the cost of solving the additional LPs from strong branching, and dynamically decide after each candidate evaluation whether to continue or stop strong branching.
Computational experiments show that our Probabilistic Lookeahead Strong Branching algorithm improves over the static lookahead rule on the MIPLIB benchmark both in time and in number of nodes, especially on hard instances, a result that could not be achieved by simply adjusting SB parameters.

The different contributions of our paper open promising venues for future research.
The proposed PVB represents a variable as a single geometric mean gain, allowing us to build our lookahead strategy on the mean gains only.
Future work will include designing abstract models representing unbalanced binary trees, i.e.~with left and right dual gains.
The proposed algorithm performs better than the default rule on average on the benchmark set but does not dominate it on all instances.
Future work should investigate the criteria influencing the relative effectiveness of the two algorithms, and whether it is possible to improve over both of them based on static information available before starting to branch.
The presented model is oblivious to variable histories and only fits the distribution based on the dual gains observed at the current node.
An enhancement of the method could consider a Bayesian-like update of the fitted distribution, allowing the use of the dynamic lookahead earlier and with a better distribution fit.
We studied distributions mixing a mass point at 0 and a continuous distribution, which captures a good fraction of the series of dual gains at any given node.
This is to the best of our knowledge the first study modeling dual gains directly with a mixed distribution which appears essential from the data since many variables will yield a zero dual gain.
This probabilistic model has promising applications in other parts of reliability branching, e.g., to improve the estimation of pseudocosts and to determine when pseudocosts are reliable already.

\subsubsection{Acknowledgements} 

Research reported in this paper was partially supported through the Research Campus Modal funded by the German Federal Ministry of Education and Research (fund numbers 05M14ZAM, 05M20ZBM).
We thank Nicolas Gast and Bruno Gaujal for discussions on online decision-making including the Pandora's box problem.

\bibliographystyle{abbrvnat}
\bibliography{refs}

\end{document}